\newtheorem{theorem}{Theorem}[section]
\newtheorem{corollary}[theorem]{Corollary}
\newtheorem{lemma}[theorem]{Lemma}
\newtheorem{proposition}[theorem]{Proposition}
\newtheorem{example}[theorem]{Example}
\newtheorem{remark}[theorem]{Remark}
\newtheorem{definition}[theorem]{Definition}
\newcommand{\N}{{\mathbb{N}}}
\newcommand{\Z}{{\mathbb{Z}}}
\newcommand{\Ap}{\operatorname{Ap}}
\newcommand{\wAp}{\widehat{\operatorname{Ap}}}
\newcommand{\MED}{\operatorname{MED}}
\newcommand{\PF}{\operatorname{PF}}
\title[On the computation of the MED closure]{On the computation of  the MED closure of a numerical semigroup}
\author[J. Jiménez--Urroz]{Jorge Jiménez--Urroz}
\address{Departamento de Matem\'aticas e Informática Aplicadas a las Ingenierías Civil y Naval, E.T.S. de Ingeniería de Caminos, Canales y Puertos, Universidad Politécnica de Madrid. Calle Prof. Aranguren 3, 28040 Madrid, Spain.}
\author[J.M. Tornero]{Jos\'e M. Tornero}
\address{Departamento de \'Algebra, Facultad de Matem\'aticas and IMUS, Universidad de Sevilla. Avda. Reina Mercedes s/n, 41012 Sevilla, Spain.}
\thanks{This work was supported by the \emph{Ministerio de Ciencia e Innovaci\'on} under Project PID2020-114613GB-I00 (MCIN/AEI/10.13039/501100011033).}
\subjclass[2010]{Primary: 20M14}
\keywords{Numerical semigroups, Arf rings, Apéry sets, MED closure.}
\date{\today}
\begin{document}

\begin{abstract}
Maximally embedding dimension (MED) numerical semigroups are a wide and interesting family, with some remarkable algebraic and combinatorial properties. Associated to any numerical semigroup one can construct a MED closure, as it is well--known. This paper shows two different explicit methods to construct this closure which also sheds new light on the very nature of this object.
\end{abstract}

\maketitle

\section{Essential concepts}

A semigroup is a pair $(X,\star)$, where $X$ is a set and $\star$ is an associative internal operation. Actually we will be considering monoids, that is, semigroups with unit element, but there are no substantial differences for our concerns. We will be particularly interested in the so--called numerical semigroups. Useful references for the basic concepts and for the results not proved here are \cite{GR,RA}.

\begin{definition}
A numerical semigroup is an additive monoid $S \subset \Z_{\geq 0}$ such that $\Z_{\geq 0} \setminus S$ is a finite set.
\end{definition}

\begin{example}
The first natural example of a semigroup in $\Z_{\geq 0}$ is the semigroup generated by a set $\{ a_1, \ldots ,a_k \big\} \subset \Z_{\geq 0}$, which is the set of linear combinations of these integers with non--negative integral coefficients:
$$
\langle a_1, \ldots ,a_k \rangle = \left\{ \lambda_1 a_1 + \cdots + \lambda_k a_k \; | \; \lambda_i \in \Z_{\geq 0} \right\}.
$$

It turns out that this example is in fact the general case for a numerical semigroup.
\end{example}

\begin{proposition}
Every numerical semigroup $S$ can be written in the form 
$$
S = \langle a_1, \ldots ,a_k \rangle,
$$ 
with $\gcd(a_1, \ldots ,a_k)=1$. We will call $\{a_1, \ldots ,a_k\}$ a set of generators of $S$. There exists a unique minimal set of generators of $S$.
\end{proposition}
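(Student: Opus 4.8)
The plan is to identify the minimal generating set with an intrinsic, coordinate-free object, namely the set of \emph{irreducible} elements of $S$. Writing $S^* = S \setminus \{0\}$, I would introduce
$$
A = S^* \setminus (S^* + S^*),
$$
the nonzero elements of $S$ that cannot be written as a sum of two nonzero elements of $S$. The entire statement then reduces to three facts about $A$: that $A$ generates $S$, that $A$ is finite, and that $A$ is contained in \emph{every} generating set. Together these show $A$ is simultaneously a generating set, minimal, and unique.

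For generation I would argue by strong induction on the elements of $S^*$: if $s \in A$ it is itself a combination of elements of $A$; otherwise $s = a + b$ with $a, b \in S^*$ both strictly smaller than $s$, and the inductive hypothesis expresses each of $a$ and $b$ through $A$, hence so is $s$. Thus $\langle A \rangle = S$. For finiteness, note that since $\Z_{\geq 0} \setminus S$ is finite, $S$ is infinite and has a least positive element $m$ (the multiplicity). I claim each residue class modulo $m$ contains at most one element of $A$: if $g_1 < g_2$ lie in $S^*$ with $g_1 \equiv g_2 \pmod m$, then $g_2 - g_1 = km$ with $k \geq 1$, so $g_2 = g_1 + (\,\underbrace{m + \cdots + m}_{k}\,)$ exhibits $g_2 \in S^* + S^*$, which excludes $g_2$ from $A$. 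Hence $|A| \leq m < \infty$.

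For minimality and uniqueness, let $G$ be any generating set and take $a \in A$. Writing $a = \sum_{g \in G} \lambda_g g$ with $\lambda_g \in \Z_{\geq 0}$ and $a \neq 0$, at least one coefficient is positive; were the coefficients to sum to at least $2$, I could split off a single generator and express $a$ as a sum of two nonzero elements of $S$, contradicting $a \in A$. Therefore $a = g$ for some $g \in G$, so $A \subseteq G$. Combined with $\langle A \rangle = S$, this proves $A$ is the unique minimal set of generators. Finally, the gcd of any generating set equals $d := \gcd\{s : s \in S\}$, since every element of $S$ is a nonnegative combination of generators; if $d > 1$ then $S \subseteq d\,\Z_{\geq 0}$, leaving infinitely many integers outside $S$ and contradicting the finiteness of the complement, so $d = 1$.

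The individual steps are short, and I expect the main point to get right to be the finiteness of $A$: this is where the multiplicity $m$ and the reduction modulo $m$ do the structural work, and it is the only place (apart from the gcd claim) where the hypothesis that $\Z_{\geq 0} \setminus S$ is finite enters essentially. Once finiteness is secured, generation, minimality, and uniqueness follow from the irreducibility characterization with no further obstacles.
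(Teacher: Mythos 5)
Your proof is correct, and it is the standard argument: the paper states this proposition without proof (deferring to its references), and the canonical proof found there identifies the minimal generating set with exactly your set $S^{*}\setminus(S^{*}+S^{*})$ of irreducible elements, establishing generation, finiteness via residues modulo the multiplicity, and containment in every generating set just as you do. The gcd observation via finiteness of the complement is likewise the expected one, so nothing further is needed.
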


Note that, as $\Z_{\geq 0} \setminus S$ is a finite set, there exists $N \in \Z$ such that $x \in S$, for all $x \geq N$.

As $S = \langle \, a_1, \ldots ,a_k \, \rangle$ is nothing but the set of non--negative integers that can be written as a linear combination (with non--negative coefficients) of $\{ a_1, \ldots ,a_k \big\}$, the elements of $S$ are often called \textit{representable integers} (w.r.t. $\{ a_1, \ldots ,a_k \big\}$). In the same fashion the elements of the (finite) set $\Z_{\geq 0} \setminus S$ are called \textit{non--representable integers} or \textit{gaps}.

\begin{definition}
Some important invariants associated to a numerical semigroup $S$ are:

\begin{itemize}
\item The set of gaps, which is the finite set $\Z_{\geq 0} \setminus S$, noted $\operatorname{G}(S)$. Its cardinal is called the genus of $S$, noted $g(S)$.
\item The Frobenius number of $S$ which is the maximum of $\operatorname{G}(S)$, noted $f(S)$. The conductor of $S$ is $c(S) = f(S)+1$, and it is the smallest number verifying $c(S) + \Z_{\geq0} \subset S$.
\item The set of sporadic (or left) elements, noted $N(S)$, which are elements of $S$ smaller than $f(S)$, that is $N(S) = S \cap [0,f(S)]$. Its cardinal is noted $n(S)$ (this invariant has not a properly stablished name in the literature) and verifies directly $n(S)+g(S) = c(S)$.
\item The set of pseudo-Frobenius numbers, which is
$$
\PF(S) = \big\{ x \in \operatorname{G}(S) \; | \; x+s \in S, \; \forall s \in S \setminus \{ 0 \} \big\}.
$$
Its cardinal is called the type of $S$, noted $t(S)$. Clearly $f(S) \in \PF(S)$.
\item The multiplicity of $S$, noted $m(S)$, which is the smallest non--zero element in $S$ (obviously a generator in any case).
\item The embedding dimension of $S$, noted $e(S)$, which is the cardinal of the minimal set of generators.
\end{itemize}
\end{definition}

\ 

An important object related to an element of a numerical semigroup is the so-called Apéry set, named after Roger Apéry and his seminal work relating numerical semigroups and resolution of curve singularities \cite{Apery}.

\begin{definition}
With the previous notations, the Apéry set of $S$ with respect to an element $s \in S \setminus \{0\}$ can be defined as
$$
\Ap(S,s) = \big\{ 0 = w_0, w_1 , \ldots , w_{s-1} \big\}
$$
where $w_i$ is the smallest element in $S$ congruent with $i$ modulo $s$. 
\end{definition}

In particular, mind that $\# \Ap(S,s) = s$. This set has an interesting equivalent definition, which we will review now.

\begin{lemma} 
With the previous notations, for all $s \in S \setminus \{0\}$,
$$
\Ap(S, s) = \big\{x  \in S \; | \; x-s \notin S \big\}
$$
\end{lemma}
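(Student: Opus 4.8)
The plan is to establish the set equality by proving the two inclusions separately, in each case exploiting the defining minimality of the elements $w_i$ together with the fact that $S$ is closed under addition and contains $s$.

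For the inclusion $\Ap(S,s) \subseteq \{x \in S \mid x - s \notin S\}$, I would take an arbitrary $w_i \in \Ap(S,s)$. By definition $w_i \in S$, so it only remains to check that $w_i - s \notin S$. If $w_i - s < 0$ this is immediate, since $S \subseteq \Z_{\geq 0}$; otherwise, observe that $w_i - s \equiv i \pmod s$, so were $w_i - s$ to lie in $S$ it would be an element of $S$ congruent to $i$ modulo $s$ and strictly smaller than $w_i$, contradicting the minimality that defines $w_i$.

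For the reverse inclusion, I would start with $x \in S$ satisfying $x - s \notin S$ and let $i$ be the residue of $x$ modulo $s$, so that $x \equiv i \pmod s$ and $w_i \le x$ by the minimality of $w_i$. Since $x - w_i$ is then a non-negative multiple of $s$, I write $x - w_i = ks$ with $k \ge 0$. The crux of the argument is that $k$ must vanish: if $k \ge 1$, then $x - s = w_i + (k-1)s$, and because $w_i \in S$, $s \in S$, and $S$ is closed under addition, this exhibits $x - s$ as an element of $S$, contradicting our hypothesis. Hence $k = 0$ and $x = w_i \in \Ap(S,s)$.

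The argument is essentially routine; the only point requiring genuine care is the descent in the second inclusion, where one must use both that $w_i$ already belongs to $S$ and that every non-negative multiple of $s$ is again in $S$ (because $s \in S$), so that adding back copies of $s$ never leaves the semigroup. I do not anticipate any real obstacle beyond keeping the congruence bookkeeping straight.
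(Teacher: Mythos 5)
Your proof is correct: both inclusions are handled properly, including the edge case $w_i - s < 0$ in the first inclusion and the descent argument $x - s = w_i + (k-1)s \in S$ in the second. The paper states this lemma without proof (deferring to its references for such standard facts), so there is no in-paper argument to compare against; your argument is the standard one and fills that gap adequately.
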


\begin{example} 
Let $S_1 = \langle 7, 9, 11, 15 \rangle$. Some of the Apéry sets associated to its generators are:
\begin{eqnarray*}
\Ap(S_1, 7) &=& \big\{ 0, 9, 11, 15, 20, 24, 26 \big\} \\ 
\Ap(S_1, 15) &=& \big\{ 0, 7, 9, 11, 14, 16, 18, 20, 21, 23, 25, 27, 28, 32, 34 \big\}
\end{eqnarray*}
\end{example}

Note that, from the definition, the minimal set of generators of $S$ must be included in every Apéry set, except when the element is a generator itself, in which case it is replaced by $0$ in the Apéry set. Because of this dichotomy we will sometimes use the following variant
$$
\wAp(S, s) = \big\{x  \in S \; | \; x-s \in \operatorname{G}(S) \cup \big\{ 0 \big\} \big\},
$$
which only changes $0$ by $s$ itself, and verifies that the minimal set of generators of $S$ must be included in every $\wAp(S, s)$. This implies, in particular, that $e(S) \leq m(S)$, as $m(S) = \# \Ap(S,m(S))  = \# \wAp(S,m(S))$.

\ 

If we define the partial ordering in $\Z$ given by $a \leq_S b$ if and only if $b-a \in S$, then the Apéry set verifies the following \cite{FGH}:
\begin{itemize}
    \item The Apéry sets $\Ap(S,s)$ are isolated w.r.t $\leq_S$ that is, if $x \in \Ap(S,s)$ and $y \leq_S x$, then $y \in \Ap(S,s)$.
    \item The minimal set of generators is, for all $s \in S$, $\operatorname{Min}_{\leq_S} \Ap(S,s)$.
    \item The set of pseudo--Frobenius elements is, for all $s \in S$, 
    $$
    \operatorname{PF} (S) = \left\{ x - s \; \Big| \; x \in \operatorname{Max}_{\leq_S} \Ap(S,s) \right\}.
    $$
\end{itemize}

The Apéry set can be explicitely computed \cite{GAP,MOT}, although its computation can be shown to be NP--hard, as it implies the calculation of $f(S)$ \cite{RA2}.

\ 

Finally, let us recall the following definition.

\begin{definition}
    Let $S$ be a numerical semigroup. A set $I \subset S$ is called an ideal (of $S$) if $I+S \subset I$, that is, for all $x \in I$ and $y \in S$, we have $x+y \in I$.

    Coherently, the conductor of the ideal, noted $c(I)$, is the smallest integer such that $c(I) + \Z_{\geq0} \subset I$.
\end{definition}

\section{MED closure of a numerical semigroup}

This paper will deal with a specific family of numerical semigroups, which we proceed to define. The results not proved here can be found in \cite{GR}.

\begin{definition}
Let $S$ be a numerical semigroup. We will say that $S$ is a maximal embedding dimension (MED) semigroup if $e(S) = m(S)$. Equivalently, $S$ is a MED semigroup when $\wAp(S,m(S))$ is the minimal generating set of $S$.
\end{definition}

MED semigroups are, not only a generalization of the two important families (Arf and saturated numerical semigroups), but also an interesting class of semigroups to work with on its own right. 

Among some remarkable properties of these semigroups we would mention that if $S$ is a MED semigroup with minimal generating set $a_1 < \ldots < a_k$, then we have $f(S) = a_k - a_1$. As noted above, the problem of computing $f(S)$ from $a_1, \ldots, a_k$ is NP--hard for general semigroups \cite{RA2}. 

In the same vein, the genus $g(S)$, for a MED semigroup generated by $a_1 < \ldots < a_k$ can be computed easily, as
$$
g(S) = a_k - \left( \frac{a_2+ \cdots + a_k}{a_1} + \frac{a_1+1}{2} \right).
$$

\ 

Our next aim is to give a characterization for the MED semigroups, in terms of the Apéry sets, in order to be effective. We begin with a definition which puts the focus on the behaviour of single elements, instead of the full semigroup. 

\begin{definition}
Let $S$ be a numerical semigroup, $z \in S$. We will say that $z$ is an Arf element of $S$ if, for all $x,y \in S$ with $x,y \geq z$, $x+y-z \in S$.

When all elements of $S$ are Arf, $S$ will be called an Arf semigroup.
\end{definition}

If no confusion arises, we will simply say \textit{$z$ is Arf} instead of \textit{$z$ is an Arf element of $S$}. Some immediate remarks from the definition are the following:
\begin{enumerate}
    \item[a)] It is equivalent to check the condition for $x,y>z$.
    \item[b)] If $z>f(S)$, $z$ is Arf, as $x+y-z > f(S)$ for all $x,y>z$.
\end{enumerate}

Therefore, in order to see if $S$ is an Arf semigroup, we only have to care about $N(S)$, the set of sporadic (or left) elements of $S$ (actually the non-zero ones), which is in fact a finite set.

\ 

An easy equivalent definition, in terms of Apéry sets, is the following.

\begin{proposition}
Let $S$ be a numerical semigroup, $z \in S  \setminus \{0\}$. Then the following con\-di\-tions are equivalent:
\begin{enumerate}
    \item[a)] $z$ is Arf.
    \item[b)] If $x,y \in \Ap(S,z)$, with $x,y>z$, then $x+y \notin \Ap(S,z)$.
\end{enumerate}
\end{proposition}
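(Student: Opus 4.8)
The plan is to translate freely between the additive Arf condition in (a) and the combinatorial condition (b) by means of the characterization $\Ap(S,z) = \big\{ x \in S \mid x - z \notin S \big\}$ established in the earlier Lemma, together with remark (a), which lets us replace the constraint $\geq z$ by $> z$ throughout. The key structural fact I would lean on is that every $x \in S$ decomposes as $x = w + a z$ with $w \in \Ap(S,z)$ and $a \in \Z_{\geq 0}$: indeed, $w$ is the least element of $S$ congruent to $x$ modulo $z$, so $w \in S$ and $x - w$ is a non-negative multiple of $z$.

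For (a) $\Rightarrow$ (b) the argument is immediate. Given $x, y \in \Ap(S,z)$ with $x, y > z$, the Arf property applied to the pair $x, y \in S$ (both exceeding $z$) yields $x + y - z \in S$. Since also $x + y \in S$, the Lemma's characterization gives exactly $x + y \notin \Ap(S,z)$, which is precisely (b).

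For (b) $\Rightarrow$ (a) I would use remark (a) and show $x + y - z \in S$ for arbitrary $x, y \in S$ with $x, y > z$. Writing $x = w_x + a z$ and $y = w_y + b z$ as above, I split into two cases. If $a + b \geq 1$, then $x + y - z = (w_x + w_y) + (a + b - 1) z$ is the sum of the element $w_x + w_y \in S$ and a non-negative multiple of $z$, hence lies in $S$ with no further hypothesis needed. If instead $a = b = 0$, then $x = w_x$ and $y = w_y$ both lie in $\Ap(S,z)$ and both exceed $z$, so hypothesis (b) applies and gives $x + y \notin \Ap(S,z)$; combined with $x + y \in S$ and the Lemma, this forces $x + y - z \in S$.

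The main obstacle---really the only point requiring care---is recognising that the Apéry reduction $x = w_x + a z$ funnels the whole verification into precisely the case $a = b = 0$, which is exactly the scenario governed by (b); the reducible cases are handled for free because $z \in S$ and $S$ is closed under addition. Once this case analysis is set up, both implications close using only the Lemma and the closure of $S$.
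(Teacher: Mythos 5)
Your proof is correct and follows essentially the same route as the paper: the forward direction is immediate from the two definitions, and the reverse direction reduces to the case where both $x$ and $y$ lie in $\Ap(S,z)$, the complementary case being handled trivially because $x-z\in S$ (your decomposition $x=w_x+az$ with $a+b\geq 1$ is just a more structured phrasing of the paper's observation that $x\notin\Ap(S,z)$ gives $x-z\in S$ and hence $(x-z)+y\in S$).
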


\begin{proof}
The direct implication is straightforward from the definitions of Arf elements and Apéry sets. Let us check the reverse one. 

In order to show that $z$, verifying condition b), is Arf, let us take $x,y \in S$ with $x,y>z$. Then, if $x \notin \Ap(S,z)$, $x-z \in S$ and therefore $(x-z)+y \in S$. The same goes if $y \notin \Ap(S,z)$. Therefore, we only have to care about the case $x,y \in \Ap(S,z)$, in which case, condition b) implies $(x+y)-z \in S$. This finishes the proof.
\end{proof}



In more common terms in combinatorial number theory one could say that $z$ is Arf if and only if
$$
\{ x \in \Ap(S,z) \; | \; x>z \big\}
$$
is a sum--free set. Note that exactly the same result holds for $\wAp(S,z)$.

\begin{remark}
For an arbitrary element $s \in S$, being Arf and having $\wAp(S,s)$ sum--free are \textit{not} equivalent. For example $\wAp(S,2m(S))$ is never sum--free, even if $S$ is Arf.
\end{remark}

The following result has a consequence which illustrates nicely the relationship between MED and Arf numerical semigroups. This result can be found in \cite{GR}, within a different context, including more equivalent conditions (which are of no use for our purposes). We include a direct proof for the convenience of the reader.

\begin{proposition}
$S$ is a MED numerical semigroup if and only if $m(S)$ is Arf.
\end{proposition}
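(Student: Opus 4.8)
The plan is to translate the MED condition into a statement about single elements of the Apéry set, and then match it, element by element, with the sum-free characterization of the Arf property already established above. Write $m = m(S)$ throughout. Since the minimal generating set of $S$ is contained in $\wAp(S,m)$ and $\#\wAp(S,m) = m$, we always have $e(S) \le m(S)$, with equality precisely when \emph{every} element of $\wAp(S,m)$ is a minimal generator. The multiplicity $m$ is itself always a minimal generator (every nonzero element of $S$ is $\ge m$, so a proper sum of two of them exceeds $2m > m$), so the whole question reduces to whether each nonzero element $w$ of $\Ap(S,m)$ --- equivalently each element of $\wAp(S,m)$ other than $m$ --- is a minimal generator. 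I would therefore prove the contrapositive equivalence: $S$ is \emph{not} MED if and only if $m$ is \emph{not} Arf.

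The technical core, and the step I expect to be the main obstacle, is the following reduction: if $w \in \Ap(S,m)$ with $w > m$ is not a minimal generator, then it can be written as $w = a + b$ with $a,b \in \Ap(S,m)$ and $a,b > m$. Starting from any factorization $w = a+b$ with $a,b \in S\setminus\{0\}$, I first note that $a,b > m$: if, say, $a = m$, then $w - m = b \in S$, contradicting $w \in \Ap(S,m)$ (recall $\Ap(S,m) = \{x \in S : x - m \notin S\}$). The key point is then that both summands must in fact lie in $\Ap(S,m)$: if $a \notin \Ap(S,m)$ then $a - m \in S$, and since $a > m$ this is a nonzero element, whence $w - m = (a-m) + b \in S$ --- again contradicting $w \in \Ap(S,m)$. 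Thus both $a$ and $b$ land in the Apéry set, both strictly above $m$.

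Finally I would assemble the equivalence. By the reduction, $S$ fails to be MED exactly when some $w \in \Ap(S,m)$ with $w > m$ is expressible as $a + b$ with $a,b \in \Ap(S,m)$ and $a,b > m$; that is, exactly when the set $\{x \in \Ap(S,m) : x > m\}$ is not sum-free. By the Proposition characterizing Arf elements (condition b), this is precisely the statement that $m$ is not Arf. Passing back through the contrapositive yields $S$ MED $\iff$ $m(S)$ Arf, as claimed.
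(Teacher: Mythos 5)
Your proof is correct and takes essentially the same route as the paper's: both arguments identify MED-ness with every nonzero element of $\Ap(S,m)$ being a minimal generator and then invoke the sum-free characterization of Arf elements. Your reduction lemma (any factorization $w=a+b$ of an Apéry element forces $a,b\in\Ap(S,m)$ with $a,b>m$) is precisely the content of the paper's two separate exclusions in the reverse implication, merely packaged so that both directions of the equivalence drop out at once.
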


\begin{proof}
Assume $S$ is a MED numerical semigroup, minimally generated by $a_1 < \ldots < a_k$ (that is, $m(S) = a_1$). As $e(S) = m(S) = a_1$, the Apéry set $\Ap(S,a_1)$ must be
$$
\Ap(S,a_1) = \left\{ 0,a_2,\ldots,a_k \right\}.
$$

All the non-zero elements are bigger than $a_1$. If we take two of these elements, say $a_i, a_j \in \Ap(S,a_1)$, then $a_i+a_j \notin \Ap(S,a_1)$, for otherwise the set of generators would not be minimal. Therefore, $a_1=m(S)$ is Arf.

Conversely, assume $m(S) = a_1$ is Arf and let its Apéry set be
$$
\Ap(S,a_1) = \left\{ 0=x_1 ,x_2,\ldots,x_{m(S)} \right\}.
$$

This implies, as we saw above, that $\left\{ a_1, x_2,\ldots, x_{m(S)} \right\}$ is a generating set. But being $a_1$ Arf, it is easy to see that $x_2,\ldots,x_{m(S)}$ cannot be expressed (in a non--trivial way) as a sum of elements of $S$. On the one hand, $x_i$ is the minimal element of $S \cap (x_i+\Z/\Z a_1)$, so it cannot be written as 
$$
x_i = y + \lambda a_1, \mbox{ with } y \in S, \; \lambda \in \Z_{> 0}.
$$

On the other hand, from the Arf condition, $x_i$ cannot be written as the sum of two elements of $S$ of non--zero residue in $\Z / \Z a_1$. Therefore $\left\{ a_1, x_2,\ldots, x_{m(S)} \right\}$ is the minimal generating set, and $S$ is a MED numerical semigroup.
\end{proof}

Therefore, all Arf semigroups are MED semigroups.

\begin{example}
Trivially $S_1$ is not a MED numerical semigroup, since $e(S)<m(S)$. Note in this case that
$$
\Ap(S_1, 7) = \big\{ 0, \mathbf{9}, \mathbf{11}, 15, \mathbf{20}, 24, 26 \big\}.
$$

Analogously, 
$$
S_2 = \langle 9, 13, 14, 16, 17, 19, 20, 21, 24 \rangle
$$ 
is not Arf, as
$$
\Ap(S_2, 13) = \big\{ 0, 9, \mathbf{14}, 16, 17, 18, 19, 20, 21, 23, 24, 25, \mathbf{28} \big\},
$$
but
$$
\Ap(S_2, 9) = \big\{ 0, 13, 14, 16, 17, 19, 20, 21, 24 \big\},
$$
so it is MED (we could have also checked that the given generating system is minimal, of course).

If we consider $S_3 = \langle 7, 11, 13, 15, 16, 17, 19 \rangle$, we have
$$
\Ap(S_3, 7) = \big\{ 0, 11, 13, 15, 16, 17, 19 \big\},
$$
so it is MED. Therefore $f(S_3) = 19-7 = 12$ and $N(S_3) = \big\{0,7,11\}$. We have already checked $7$ is Arf, so it only remains to compute
$$
\Ap(S_3, 11) = \big\{ 0, 7, 13, 14, 15, 16, 17, 19, 20, 21, 23 \big\},
$$
where the elements bigger than $11$ are clearly a sum-free subset. Therefore, $S_3$ is Arf.
\end{example}

So, we have translated both categories (MED and Arf semigroups) into similar conditions (some elements verifying the Arf property). While for MED semigroups the Arf property only must hold for $m(S)$, for Arf semigroups all the elements of $N(S)$ must be Arf elements. 

\ 

We finish this section by remembering two important results. The first one can be found in \cite{RGGB}, but we will include a proof, as the reference is apparently hard to access.

\begin{theorem}\label{MED closure}
Let $S$ be a numerical semigroup, $m=m(S)$. There exists a MED semigroup, noted $\MED(S)$, which is the smallest MED semigroup contaning $S$ with multiplicity $m(S)$. It is called the MED closure of $S$.
\end{theorem}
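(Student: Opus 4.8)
The plan is to realize $\MED(S)$ as an intersection, following the standard recipe for closure operators. I would introduce the family
$$
\cF = \big\{ T \text{ a numerical semigroup} \;:\; S \subseteq T,\ m(T)=m,\ T \text{ is MED} \big\},
$$
show that it is nonempty and closed under intersection, and then define $\MED(S) = \bigcap_{T \in \cF} T$; the two properties just mentioned will guarantee that this intersection is again a member of $\cF$, and it is by construction contained in every element of $\cF$, hence the smallest MED semigroup containing $S$ with multiplicity $m$.

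First I would check $\cF \neq \emptyset$ by exhibiting an explicit element. The natural candidate is $T_0 = \{0\} \cup (m + \Z_{\geq 0}) = \{0, m, m+1, m+2, \ldots\}$. Since every nonzero element of $S$ is at least $m = m(S)$, we have $S \subseteq T_0$; clearly $T_0$ is a numerical semigroup of multiplicity $m$; and one computes $\Ap(T_0,m) = \{0\} \cup \{m+1, \ldots, 2m-1\}$, whose elements exceeding $m$ form a sum--free set, the smallest possible sum being $(m+1)+(m+1) = 2m+2 > 2m-1$. By the Proposition characterizing MED semigroups, $m$ is Arf in $T_0$, so $T_0 \in \cF$.

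The key step is closure under intersection. Let $\{T_i\}$ be any subfamily of $\cF$ and put $T = \bigcap_i T_i$. That $T$ is a submonoid is immediate, and it is a numerical semigroup because $\Z_{\geq 0} \setminus T = \bigcup_i (\Z_{\geq 0} \setminus T_i) \subseteq \operatorname{G}(S)$ is finite (each $T_i \supseteq S$). Moreover $S \subseteq T$, and since every $T_i$ has multiplicity $m$, no element of $\{1, \ldots, m-1\}$ lies in any $T_i$ while $m$ lies in all of them, so $m(T)=m$. It remains to see that $T$ is MED, and here I would use that, for a semigroup of multiplicity $m$, being MED is equivalent (via the Proposition above, since every nonzero element is $\geq m$) to the closure condition that $x+y-m \in T$ whenever $x,y \in T \setminus \{0\}$. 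This condition is manifestly inherited by intersections: if $x,y \in T \setminus \{0\}$ then $x,y \in T_i \setminus \{0\}$ for every $i$, whence $x+y-m \in T_i$ for every $i$, so $x+y-m \in T$. Thus $m$ is Arf in $T$ and $T \in \cF$, which completes the argument.

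The point to get right, and the reason the theorem fixes the multiplicity at $m(S)$, is precisely this intersection step: the equivalence ``MED $\Leftrightarrow$ $m$ Arf'' converts MED--ness into a closure condition relative to the single fixed value $m$, and only conditions attached to a common $m$ survive intersection. Allowing the multiplicity to vary would break this, since $\Z_{\geq 0} = \langle 1 \rangle$ is itself a MED semigroup containing every $S$. Once MED--ness is rephrased as the closure property $x,y \in T \setminus \{0\} \Rightarrow x+y-m \in T$, everything else reduces to routine finiteness bookkeeping.
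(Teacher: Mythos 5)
Your proposal is correct and follows essentially the same route as the paper: both define the family of MED oversemigroups of $S$ with multiplicity $m$, verify it is nonempty via $\{0,m,m+1,\ldots\}$, and take the intersection, using the fact that the Arf condition at the fixed element $m$ is preserved under intersection. You simply spell out the details (the sum--freeness of the Apéry set of the witness semigroup and the inheritance of the closure condition $x+y-m\in T$) that the paper leaves to the reader.
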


\begin{proof}
Let us call
$$
{\mathcal A}_S = \Big\{ T \; \Big| \; m(T) = m, \; S \subset T, \; T \mbox{ is MED} \Big\}.
$$

Clearly ${\mathcal A}_S$ is finite, because there are only finitely many numerical semigroups containing $S$. Also, ${\mathcal A}_S \neq \varnothing$, because $T = \big\{ 0,m,\rightarrow \big\} \in {\mathcal A}_S$. Here the symbol $\rightarrow$ means that all integers greater than the preceding number are in $T$.

Therefore it makes sense considering
$$
S_0 = \bigcap_{T \in {\mathcal A}_S} T,
$$
which is clearly a numerical semigroup with $m(S_0) = m$. $S_0$ is also a MED semigroup, and this is easily checkable using the fact that $m$ is an Arf element of $T$, for all $T \in {\mathcal A}_S$. 

It is also immediate that $S \subset S_0 \subset T$, for all $T \in {\mathcal A}_S$, by definition of $S_0$. Hence, $S_0$ is the numerical semigroup $\MED(S)$.
\end{proof}

\begin{lemma}\label{lem:union}
    Let $u\in \MED(S)$ and  $S_u$ be the semigroup generetad by $S\cup \{u\}$. Then   $\MED(S_u)=\MED(S)$.
\end{lemma}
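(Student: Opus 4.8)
The plan is to exploit the universal characterization of the MED closure established in Theorem~\ref{MED closure}: $\MED(T)$ is the \emph{smallest} MED semigroup that contains $T$ and shares its multiplicity, equivalently it is contained in every MED semigroup of multiplicity $m(T)$ that contains $T$. First I would record the basic containment chain. Since $u \in \MED(S)$, both $S$ and $u$ lie inside the semigroup $\MED(S)$, so the semigroup generated by $S \cup \{u\}$ cannot escape it; hence
\[
S \subset S_u \subset \MED(S).
\]
From this chain I would extract the key numerical fact that $m(S_u) = m(S) = m$: every nonzero element of $S_u$ belongs to $\MED(S)$ and is therefore at least $m$, while the inclusion $S \subset S_u$ forces $m(S_u) \leq m$.

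With the multiplicities matched, the equality follows from a double inclusion, each time invoking only the minimality of the relevant closure. For $\MED(S_u) \subset \MED(S)$, I would note that $\MED(S)$ is itself a MED semigroup of multiplicity $m$ containing $S_u$, so it is one of the competitors in the intersection defining $\MED(S_u)$; minimality gives the inclusion at once. For the reverse inclusion $\MED(S) \subset \MED(S_u)$, I would observe that $\MED(S_u)$ is a MED semigroup of multiplicity $m$ (here one uses $m(S_u) = m$) containing $S$, because $S \subset S_u \subset \MED(S_u)$; minimality of $\MED(S)$ then yields the claim. Combining the two inclusions finishes the argument.

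The proof carries no serious technical obstacle; the only point demanding genuine care is the verification that $m(S_u) = m$. This is precisely the hypothesis that keeps both closures within the same family of competitors: were the multiplicity to drop upon adjoining $u$, the two MED closures would be defined relative to incomparable families and the minimality comparisons above would break down. The containment $u \in \MED(S)$ is exactly what guarantees that $u = 0$ or $u \geq m$, so that no smaller nonzero element is ever introduced.
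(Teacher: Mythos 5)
Your proposal is correct and is essentially the paper's own argument: both rest on the minimality characterization of the MED closure from Theorem~\ref{MED closure}, with the key observation that $u\in\MED(S)$ forces $m(S_u)=m(S)$ and keeps every competitor semigroup common to both closures. The paper phrases this as the equality of the families ${\mathcal A}_S={\mathcal A}_{S_u}$ while you phrase it as a double inclusion via the universal property, but these are the same proof.
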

\begin{proof} First we note that $m(S_u)=m(S)$ and hence $ {\mathcal A}_{S_u}\subset {\mathcal A}_S$. Moreover  if $u\in\MED(S)$ then $u\in T$ for all $T\in{\mathcal A}_{S}$ so $T\in {\mathcal A}_{S_u}$ if and only if $T\in {\mathcal A}_{S}$.  Hence 
$$
\MED(S)=\bigcap_{T \in {\mathcal A}_S} T=\bigcap_{T \in {\mathcal A}_{S_u}} T=\MED(S_u).
$$
\end{proof}

Once this is done, one can compute $\MED(S)$ in many cases, for a given semigroup $S$, thanks to this result from \cite[Thm.14]{RGGB}:

\begin{theorem}\label{compMED}
    Let $m, r_1, \ldots, r_p \in \N^*$ such that $\gcd \left( m, r_1,\ldots, r_p \right) = 1$. Then
    $$
    \MED \big( \langle m, m+r_1,\ldots, m+r_p \rangle \big) = \big( m + \langle m, r_1,\ldots, r_p \rangle \big) \cup \big\{ 0 \big\}.
    $$
\end{theorem}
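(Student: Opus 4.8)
The plan is to write $G = \langle m, r_1, \ldots, r_p \rangle$ and $T = (m + G) \cup \{0\}$, and to prove the two inclusions $\MED(S) \subseteq T$ and $T \subseteq \MED(S)$ separately, where $S = \langle m, m+r_1, \ldots, m+r_p \rangle$. Note first that $\gcd(m, m+r_1, \ldots, m+r_p) = \gcd(m, r_1, \ldots, r_p) = 1$, so both $G$ and $S$ are numerical semigroups, and since each generator $m+r_i$ exceeds $m$ we have $m(S) = m$.

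First I would check that $T$ belongs to $\mathcal{A}_S$. The computation doing all the work is that, for nonzero $x = m + g_1$ and $y = m + g_2$ in $T$ (with $g_1, g_2 \in G$),
$$x + y = m + (m + g_1 + g_2) \in m + G, \qquad x + y - m = m + (g_1 + g_2) \in m + G,$$
using that $m, g_1+g_2 \in G$. The first identity shows $T$ is closed under addition, hence a submonoid; since $G$ is cofinite in $\Z_{\geq 0}$, so is $m+G$ in $\{m, m+1, \ldots\}$, and therefore $T$ is a numerical semigroup whose least positive element is $m + 0 = m$, i.e. $m(T) = m$. The second identity says exactly that $m$ is an Arf element of $T$ (every nonzero element of $T$ is $\geq m$), so by the Proposition characterising MED semigroups as those whose multiplicity is Arf, $T$ is MED. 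To finish placing $T$ in $\mathcal{A}_S$ I would verify $S \subseteq T$: each generator lies in $T$, since $m = m+0 \in m+G$ and $m+r_i \in m+G$ (as $r_i \in G$), and $T$ is a semigroup. Thus $T \in \mathcal{A}_S$, and since $\MED(S) = \bigcap_{T' \in \mathcal{A}_S} T' \subseteq T$, the inclusion $\MED(S) \subseteq T$ is immediate.

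The substance of the proof is the reverse inclusion $T \subseteq \MED(S)$, which I would obtain by showing $m + g \in \MED(S)$ for every $g \in G$ by induction on the length $\alpha = \alpha_0 + \alpha_1 + \cdots + \alpha_p$ of a representation $g = \alpha_0 m + \alpha_1 r_1 + \cdots + \alpha_p r_p$ with $\alpha_i \in \Z_{\geq 0}$. When $\alpha \leq 1$ the element $m+g$ is one of $m$, $2m$, or some $m+r_i$, all of which lie in $S \subseteq \MED(S)$. For $\alpha \geq 2$, I would split the representation as $g = c + g'$, where $c \in \{m, r_1, \ldots, r_p\}$ is a single generator occurring in it and $g'$ has length $\alpha - 1$; then $m+c \in S$ and, by the inductive hypothesis, $m + g' \in \MED(S)$. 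Since $\MED(S)$ is MED, $m$ is an Arf element of it, and both $m+c$ and $m+g'$ are $\geq m$, so
$$(m+c) + (m+g') - m = m + (c + g') = m + g \in \MED(S).$$
This completes the induction, giving $T \subseteq \MED(S)$ and hence the desired equality.

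The one step requiring genuine care is this reverse inclusion; the rest is direct verification. The crucial leverage is that the Arf property of $m$ in $\MED(S)$ lets us peel off one generator at a time without accumulating the extra copies of $m$ that would be forced if we tried to realise $m+g$ inside $S$ itself — this is exactly why $\MED(S)$ is strictly larger than $S$ in general, and why the inductive splitting argument succeeds in $\MED(S)$ but not in $S$.
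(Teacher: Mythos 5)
Your proof is correct. Note, however, that the paper does not actually prove this statement: it is quoted verbatim from \cite[Thm.~14]{RGGB} and used as a black box, so there is no internal proof to compare against. Your argument is a clean, self-contained verification using only tools the paper has already set up. The upper bound $\MED(S) \subseteq T$ follows from checking that $T = (m + \langle m, r_1, \ldots, r_p\rangle) \cup \{0\}$ is itself a MED semigroup of multiplicity $m$ containing $S$ — your identity $x + y - m = m + (g_1 + g_2)$ is exactly the verification that $m$ is an Arf element of $T$, which by the paper's Proposition characterising MED semigroups makes $T$ a member of ${\mathcal A}_S$, whence $\MED(S) = \bigcap_{T' \in {\mathcal A}_S} T' \subseteq T$. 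The lower bound is the substantive part, and your induction on the length of a representation of $g \in \langle m, r_1, \ldots, r_p\rangle$, peeling off one generator at a time via the Arf property of $m$ in $\MED(S)$, is essentially the same mechanism the paper later isolates as Lemma \ref{lem:conmed} (which shows $\sum k_i a_i - (K-1)m(S) \in \MED(S)$ by exactly this kind of induction on $K$). So your proof not only fills the gap left by the citation but anticipates the key lemma of Section 5; had you invoked Lemma \ref{lem:conmed} directly with $a_i = m + r_i$ and $a_0 = m$, the reverse inclusion would follow in one line, though that would make the logic circular in the paper's own ordering since Lemma \ref{lem:ds} uses Theorem \ref{compMED}. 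Your independent induction avoids that circularity.
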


While this result is really useful and allows, for instance, to prove that, if $m(S), m(S)+1 \in S$, then $\MED(S) = \big\{ 0,m, \rightarrow \big\}$, it does not necessarily gives a minimal system of generators for $\MED(S)$. The rest of the paper will be devoted to give two different, iterative processes to compute $\MED(S)$ (given by a minimal system of generators) which will hopefully shed also some light on the nature of $\MED(S)$.

\section{Interlude: How to enlarge a numerical semigroup (carefully)}

As our interest will lie on computing explicitely the MED closure, we must have a word on the problem of enlarging a numerical semigroup, in a controlled manner. 

Adding (or removing) elements to a numerical semigroup while preserving the structure is a very interesting problem, with an obvious potential for induction--like strategies. In \cite{RGGJ}, for instance, the problem of adding one element is considered. As for more complicated sets, we have the following. 



\begin{lemma}[\cite{GR}, Ex.1.6]
    Let $S$ be a numerical semigroup. Consider $\PF(S) = \big\{x_1 = f(S) > x_2 > \cdots > x_t \big\}$. Then, for every $r \leq t$, $T = S \cup \{ x_1,\ldots,x_r\}$ is a numerical semigroup.
\end{lemma}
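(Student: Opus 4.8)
The plan is to verify directly from the definition that $T$ is a numerical semigroup. Since $0 \in S \subseteq T$ and $\Z_{\geq 0} \setminus T \subseteq \Z_{\geq 0} \setminus S$ is finite, the only substantive point is closure under addition: I must check that $a + b \in T$ whenever $a, b \in T$. I would organize the verification according to how many of the two summands lie in the newly adjoined part $\{x_1, \ldots, x_r\}$, which amounts to three cases.

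The two easy cases I would dispatch first. If $a, b \in S$, then $a + b \in S \subseteq T$. If exactly one summand is new, say $a \in S$ and $b = x_j$, then either $a = 0$ (so $a + b = x_j \in T$) or $a \in S \setminus \{0\}$, in which case the defining property of a pseudo-Frobenius number gives $a + x_j \in S \subseteq T$ at once.

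The main obstacle is the remaining case, where both summands are new: $a = x_i$ and $b = x_j$ with $i, j \leq r$. Here $x_i + x_j$ need not lie in $S$, so I must show it falls back among $\{x_1, \ldots, x_r\}$. The key observation I would exploit is that if $x_i + x_j$ happens to be a gap, then it is automatically a pseudo-Frobenius number: for any $s \in S \setminus \{0\}$ I can write $(x_i + x_j) + s = x_i + (x_j + s)$, and since $x_j \in \PF(S)$ the inner sum $x_j + s$ lies in $S \setminus \{0\}$, so applying the pseudo-Frobenius property of $x_i$ yields $x_i + (x_j + s) \in S$. Hence $x_i + x_j$ is either an element of $S$ or an element of $\PF(S)$.

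It then remains to control its size. Because gaps are positive we have $x_j \geq 1$, so $x_i + x_j > x_i \geq x_r$, using $i \leq r$ and the decreasing order of the $x_\ell$. Thus, in the subcase where $x_i + x_j$ is a gap, it is a pseudo-Frobenius number that is strictly larger than $x_r$ and (being a gap) at most $f(S) = x_1$, which forces $x_i + x_j \in \{x_1, \ldots, x_{r-1}\} \subseteq T$; and in the complementary subcase it lies in $S \subseteq T$. Either way $a + b \in T$, completing the verification. The only subtlety worth flagging is that the argument genuinely relies on our adjoining a top-indexed block $\{x_1, \ldots, x_r\}$ rather than an arbitrary subset of $\PF(S)$: it is precisely this that guarantees a sum of two adjoined elements cannot land on a pseudo-Frobenius number below $x_r$.
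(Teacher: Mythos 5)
Your proof is correct and follows essentially the same route as the paper's: the same three-case split on how many summands lie in $\{x_1,\ldots,x_r\}$, with the pseudo-Frobenius property handling the mixed case and the observation that a sum of two adjoined elements is either in $S$ or a pseudo-Frobenius number exceeding $x_r$. If anything, you are slightly more careful than the paper in the last case, where you explicitly separate the subcase $x_i+x_j\in S$ from the subcase where it is a gap (and hence lies in $\PF(S)$), a dichotomy the paper's wording glosses over.
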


\begin{proof}
   Indeed the set of gaps of $T$ is contained in $\operatorname{G}(S)$, hence is finite.  Finally suppose $a, b\in T$. We have three options:
    \begin{itemize} 
    \item $a,b \in S$. Then $a+b\in S\subset T$.
    \item $a \in S$, $b \notin S$. Then $b \in \PF(S)$ and, either $a=0$ and $a+b=b \in T$ or, by definition  $a+b\in S\subset T$.
    \item $a,b \notin S$. Then, for every $x \in S\setminus \{0\}$, $(a+b)+x = a+(b+x) \in S$, as $a \in \PF(S)$ and $b+x \in S$. Hence $a+b \in \PF(S)$ and, as $a+b>a,b$, it must hold $a+b \in T$.
    \end{itemize}
The three cases above prove the result.
\end{proof}

Not only we have a bigger semigroup from $S$, we can also have some control on its main invariants. Let us name, following the above notation
$$
\PF(S)_r = \big\{ x_1,\ldots,x_r\},
$$
where $\PF(S) = \big\{x_1 = f(S) > x_2 > \cdots > x_t \big\}$, and $S_r = S \cup \PF(S)_r$. Then, by definition,
$$
g(S_r) = g(S)-r,
$$
but also have the following result.

\begin{theorem}
With the notation as above we have 
$$
c(S) \leq 2g(S)-r+1.
$$
\end{theorem}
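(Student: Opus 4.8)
The plan is to reduce the statement to an inequality between the genus and the number of sporadic elements, and then to exploit the pseudo--Frobenius numbers lying strictly below $f(S)$. Recall from the list of invariants that $n(S) + g(S) = c(S)$. Substituting this identity, the claimed inequality $c(S) \leq 2g(S) - r + 1$ is equivalent to $n(S) \leq g(S) - r + 1$, that is, to $g(S) \geq n(S) + r - 1$. Since $r \leq t = t(S)$, I would prove only the strongest instance $g(S) \geq n(S) + t - 1$, from which every smaller value of $r$ follows a fortiori.

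To establish $g(S) \geq n(S) + t - 1$ I would count gaps by means of the complementation involution $\phi(x) = f(S) - x$ on the interval $[0, f(S)]$. First I would check that $\phi$ sends $N(S)$ injectively into $\operatorname{G}(S)$: if $x \in N(S) \subseteq S$ and $f(S) - x$ also belonged to $S$, then $f(S) = x + (f(S) - x) \in S$ would contradict $f(S) \in \operatorname{G}(S)$; hence $\phi(x) \in \operatorname{G}(S)$, and injectivity is immediate since $\phi$ is an involution. This already recovers the classical bound $n(S) \leq g(S)$, and it identifies $\phi(N(S))$ with the set of gaps $y$ such that $f(S) - y \in S$, a set of cardinality exactly $n(S)$.

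The extra $t - 1$ comes from observing that the pseudo--Frobenius numbers $x_2 > \cdots > x_t$ lying strictly below $f(S)$ are gaps that fall \emph{outside} $\phi(N(S))$. Indeed, for $x_i \in \PF(S)$ with $i \geq 2$ one has $f(S) - x_i > 0$, and if $f(S) - x_i$ were in $S$ then, by the very definition of $\PF(S)$, the sum $x_i + (f(S) - x_i) = f(S)$ would lie in $S$, again a contradiction. Hence $f(S) - x_i \in \operatorname{G}(S)$, so $x_i \notin \phi(N(S))$. The $t - 1$ distinct gaps $x_2, \ldots, x_t$ are therefore disjoint from the $n(S)$ gaps comprising $\phi(N(S))$, and counting inside $\operatorname{G}(S)$ gives $g(S) \geq n(S) + (t - 1)$, which closes the argument after translating back through $c(S) = n(S) + g(S)$.

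The one genuinely delicate point is the disjointness of these two families of gaps, and this is exactly where the defining property of the pseudo--Frobenius numbers is used; everything else is routine bookkeeping around the identity $c(S) = n(S) + g(S)$. It is worth remarking that the bound is sharp: for $S = \langle 3, 5, 7 \rangle$ one has $g(S) = 3$, $n(S) = 2$ and $t(S) = 2$, so $g(S) = n(S) + t(S) - 1$ and equality holds throughout.
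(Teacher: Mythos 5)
Your argument is correct, but it takes a genuinely different route from the paper. The paper proves the inequality by viewing $I = S \setminus \{0\}$ as an ideal of the enlarged semigroup $S_r = S \cup \PF(S)_r$ (so it really uses the construction of $S_r$ and the preceding lemma that $S_r$ is a numerical semigroup) and then invokes the external bound $c(J) \leq 2g(H) + |H \setminus J|$ of Bras-Amor\'os--Lee--Vico-Ot\'on, with the bookkeeping $g(S_r) = g(S) - r$ and $|S_r \setminus I| = r+1$. You instead bypass $S_r$ entirely: after translating through $c(S) = n(S) + g(S)$, you count gaps via the involution $x \mapsto f(S) - x$, showing that the $n(S)$ gaps $f(S) - x$ with $x \in N(S)$ and the $t-1$ pseudo--Frobenius numbers $x_2, \ldots, x_t$ are disjoint subsets of $\operatorname{G}(S)$ (the key point, correctly handled, being that $f(S) - x_i \notin S$ for $i \geq 2$ by the defining property of $\PF(S)$). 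This is legitimate because the statement only involves $S$ and the constraint $r \leq t(S)$, so proving the strongest case $r = t$ suffices. What each approach buys: the paper's proof showcases the $S_r$ construction that the section is about and connects to the ideal-theoretic literature, whereas yours is self-contained, elementary, proves the corollary $c(S) \leq 2g(S) - t(S) + 1$ directly, and makes the equality case transparent (equality holds exactly when $\operatorname{G}(S)$ is exhausted by the two families, as in your sharp example $\langle 3,5,7 \rangle$).
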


\begin{proof}
Let us consider the set $I=S\setminus\{0\}$ which is clearly an ideal of $S$. Now, it is also obvious that $I$ is an ideal of $S_r$. 

\ 

Indeed, if $a \in I$ and $b\in S_r$ then either $b\in S$ and then $a+b\in I$ because $I$ is an ideal of $S$, or $b\in \PF(S)_r$, and so $a+b\in S$ and $a+b\ne 0$ so $a+b\in I$. We can now apply the following result \cite[Thm.3]{BLV}:

\begin{theorem}
    Let $H$ be a numerical semigroup and $J \subset H$ an ideal of $H$. Then $c(J) \leq 2g(H) + |H\setminus J|$.
\end{theorem} 

We will apply the lemma for $J=I$ and $H=S_r$. Now, $c(I)=c(S)$, while $g(S_r)= g(S)-r$ and $|S_r \setminus J|=r+1$ and so 
$$
c(S) = c(I) \leq 2g(S_r) + |S_r \setminus J| = 2 (g(S)-r) + r + 1 = 2 g(S) - r + 1,
$$
which proves the result.
\end{proof}

\begin{corollary}
    Given a numerical semigroup $S$, $f(S) \leq 2g(S)-t(S)$, and 
    \begin{equation}\label{eq:ideal}
    2n(S) + t(S) - 1 \leq c(S) \leq 2g(S)-t(S)+1.
    \end{equation}
\end{corollary}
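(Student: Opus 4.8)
The plan is to read off all three inequalities from the Theorem just established, which states $c(S) \le 2g(S) - r + 1$ for every $r \le t(S)$. First I would take $r$ as large as the hypothesis allows, namely $r = t(S)$, since this produces the tightest version of the bound; this choice immediately yields the right-hand inequality $c(S) \le 2g(S) - t(S) + 1$ in \eqref{eq:ideal}. The construction of $S_r$ in the proof of that Theorem is valid all the way up to $r = t$, so nothing prevents us from specialising at the extreme value.

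For the Frobenius estimate I would simply invoke the identity $c(S) = f(S) + 1$ from the definitions. Subtracting $1$ from both sides of the bound just obtained gives $f(S) = c(S) - 1 \le 2g(S) - t(S)$, which is the first claimed inequality.

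The remaining (left-hand) inequality in \eqref{eq:ideal} is then a purely algebraic consequence and needs no new semigroup-theoretic input: I would substitute the relation $g(S) = c(S) - n(S)$ (equivalently $n(S) + g(S) = c(S)$, recorded among the invariants) into the upper bound, obtaining $c(S) \le 2\bigl(c(S) - n(S)\bigr) - t(S) + 1$, and then solve for $c(S)$ to get $c(S) \ge 2n(S) + t(S) - 1$. There is no genuine obstacle here; the only point worth flagging is that the lower bound on $c(S)$ is \emph{not} proved by an independent argument but falls out of the very same upper bound once $g(S)$ is traded for $c(S) - n(S)$.
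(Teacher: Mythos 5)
Your derivation is correct and is exactly the intended one: the paper states this corollary without proof, and the natural route is precisely yours — specialise the preceding Theorem at $r = t(S)$ to get the upper bound, use $c(S) = f(S)+1$ for the Frobenius estimate, and substitute $g(S) = c(S) - n(S)$ into the upper bound to extract the lower one. Your closing remark that the left-hand inequality carries no independent content beyond the right-hand one is accurate and worth making explicit.
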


\begin{remark} 
    For the so--called symmetric semigroups we have in fact an equality in (\ref{eq:ideal}) as they verify \cite[Ch.3.1]{GR}
    $$
    g(S) = \frac{c(S)}{2}, \quad t(S)=1,
    $$
    and in fact any of these two conditions can be used to define the family of symmetric semigroups.
    
    However, the equality in (\ref{eq:ideal}) is not a characterization of symmetric semigroups, as shown by
    $$
    S_5 = \langle 5,6,7,9 \rangle = \big\{ 0,5,6,7,9,\rightarrow \big\},
    $$
    with $f(S)=8$, $g(S)=5$, $\PF(S_5)= \big\{ 4,8 \big\}$. 
    
    In \cite[Thm.7]{BLV} we can find a (multiple) characterization of the equality case. Translated to our situation, we have an equality in (\ref{eq:ideal}) if and only if
    $$
    \PF(S) = \Big\{ a \in S_{t(S)} \; | \; 2g - t(S) - 1 - a \in S_{t(S)} \Big\}.
    $$
\end{remark}

In general, of course, if one adds elements outside $\PF(S)$ (or if does not do in the orderly way depicted in the above lemma), the semigroup structure might well be lost. This will be illustrated in our first MED closure process.

\section{Computing $\MED(S)$: The Apéry saturation}

We will use the Apéry sets for a general construction on this regard.

\begin{definition}
Let $S$ be a numerical semigroup, $s \in S$, with 
$$
\wAp(S,s) = \big\{ w_1 , \ldots , w_s \big\}.
$$

We will call the Apéry saturation of $S$ (w.r.t. $s$) the set
$$
S \cup \big\{ w_k-s \; \big| \; \mbox{exists } (w_i,w_j,w_k) \mbox{ with } w_i+w_j = w_k \big\}.
$$
\end{definition}

We will give now an algorithm for computing $\MED(S)$ for a given numerical semigroup $S$, with $m(S) = m$, based on the results from the previous sections. For a given numerical semigroup $S$, with $m(S)=m$, we consider the following operations:

\begin{itemize}
    \item[Step 1)] Compute $\wAp(S,m) = \big\{ m=a_0,\ldots,a_{m-1} \big\}$, with $a_i \equiv i \mod m$.
    
    \item[Step 2)] Consider 
    $$
    \widehat{S} = \Big\langle S \cup \big\{ a_k-m \; \big| \; \mbox{exists } (a_i,a_j,a_k) \mbox{ with } a_i+a_j = a_k \big\} \Big\rangle,
    $$
\end{itemize}

And we define inductively the following sequence:
$$
S^1 = S, \quad S^{n+1} = \widehat{S^n}, \;\; \forall n \geq 1.
$$

\

First, let us mention some basic facts of this procedure:
\begin{itemize}
    \item For every $n \geq 1$, $m(S^n) = m$, as in the Apéry saturation we are adding elements
    $$
    a_k - m = (a_i+a_j) - m, \mbox{ with } a_i,a_j \geq m.
    $$
    \item For every $n \geq 1$, we have that $S^n \subseteq S^{n+1}$. In fact, $S^{n+1} = S^n$ if and only if $S^n$ is a MED numerical semigroup, as it implies that $\wAp(S^n,m)$ is a sum--free set.
    \item If we order $\wAp(S,m)$ with the partial ordering $\leq_S$ introduced at the end of Section 1, it is immediate to see that we are adding to the new semigroup $\widehat{S}$ one element for every element in $\wAp(S,m)$ which is not minimal. In particular, we are adding $\PF(S)$, except those of the form $a_i+m(S)$, where $a_i$ is in the generating set.
    \item The Apéry saturation is \textit{not}, in general, a numerical semigroup (see Example \ref{notNS} below)
\end{itemize}

We can show now the fundamental result of this section: that the previous algorithm computes, in fact, $\MED(S)$.

\begin{theorem}
The previous sequence becomes stationary
$$
S = S^1 \subsetneq S^2 \subsetneq \ldots \subsetneq S^{l-1} \subsetneq S^l = S^{l+1} = \ldots,
$$
with $S^l = \MED(S)$.
\end{theorem}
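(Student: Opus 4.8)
The plan is to establish the two assertions in turn—first that the chain stabilizes, then that its limit is $\MED(S)$—with the whole argument resting on the single geometric input that $m$ is an Arf element of $\MED(S)$. The step I would carry out first, and which is the conceptual core, is the containment $S^n \subseteq \MED(S)$ for every $n$, proved by induction on $n$.

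For that induction, the base case $S^1 = S \subseteq \MED(S)$ is immediate from Theorem \ref{MED closure}. For the inductive step, assume $S^n \subseteq \MED(S)$ and let $a_k - m$ be any element adjoined in passing to the Apéry saturation, coming from a triple $a_i + a_j = a_k$ with $a_i, a_j \in \wAp(S^n, m)$. As recorded in the basic facts preceding the statement, such a triple has $a_i, a_j \geq m$. Since $\MED(S)$ is a MED semigroup, its multiplicity $m$ is an Arf element of $\MED(S)$ by the earlier Proposition; applying the Arf condition to the elements $a_i, a_j \in S^n \subseteq \MED(S)$ (both $\geq m$) yields $a_i + a_j - m = a_k - m \in \MED(S)$. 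Hence every adjoined element lies in $\MED(S)$, and because $\MED(S)$ is a numerical semigroup already containing $S^n$, the semigroup it generates with these extra elements, namely $S^{n+1} = \widehat{S^n}$, is again contained in $\MED(S)$.

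With this containment available, stationarity is formal: the chain $S^1 \subseteq S^2 \subseteq \cdots$ is increasing by the basic facts and is bounded above by $\MED(S)$; since $\MED(S) \setminus S$ is a finite set, the chain can increase strictly only finitely many times, so there is an index $l$ with $S^l = S^{l+1}$. By the basic fact that $S^{n+1} = S^n$ holds exactly when $S^n$ is MED, the stabilized semigroup $S^l$ is itself a MED numerical semigroup, with $m(S^l) = m$ and $S \subseteq S^l$. This places $S^l$ in the family ${\mathcal A}_S$ of Theorem \ref{MED closure}, so $\MED(S) = \bigcap_{T \in {\mathcal A}_S} T \subseteq S^l$; combined with $S^l \subseteq \MED(S)$ from the induction, this gives $S^l = \MED(S)$.

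I expect the main obstacle to be precisely the inductive containment step—in particular, keeping track of the hypothesis $a_i, a_j \geq m$ on the saturation triples so that the Arf property of $m$ in $\MED(S)$ applies to each newly produced element, and then noting that generating a subsemigroup from elements already inside $\MED(S)$ cannot escape $\MED(S)$. Once the inclusion $S^n \subseteq \MED(S)$ is secured, the finiteness argument for stationarity and the intersection argument identifying the limit are both routine.
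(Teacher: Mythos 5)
Your proposal is correct and rests on the same key observation as the paper's proof: the Arf property of $m$ in a MED oversemigroup forces every Apéry-saturation element $a_i+a_j-m$ into that oversemigroup, and the stationary term is MED and hence belongs to ${\mathcal A}_S$. The only cosmetic difference is that you apply this to the single semigroup $T=\MED(S)$ to get $S^n\subseteq\MED(S)$ directly, whereas the paper applies it to every $T\in{\mathcal A}_S$ to conclude ${\mathcal A}_S={\mathcal A}_{\widehat{S}}$; the two packagings are equivalent.
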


\begin{proof}
The sequence is increasing with respect to set--inclusion and the set of numerical semigroups containing $S$ with multiplicity $m$ is finite, so we must get to a stationary term, therefore an element of ${\mathcal A}_S$ (in the notation of Theorem \ref{MED closure}). We must prove that this stationary term is, in fact, the MED closure of $S$.

Assume then that $T \in {\mathcal A}_S$ and suppose we have
$$
\wAp(S,m) = \big\{ m=a_0,\ldots,a_{m-1} \big\}
$$
with $a_i+a_j = a_k$ for some $\{i,j,k\} \subset \big\{0,\ldots,m-1\}$. Then $a_i,a_j \in T$ and $T$ is a MED numerical semigroup. Therefore $m$ is an Arf element of $T$ and 
$$
a_i + a_j - m = a_k - m \in T.
$$

This shows that if $S \subset T$, then $\widehat{S} \subset T$ as well and therefore ${\mathcal A}_S = {\mathcal A}_{\widehat{S}}$. Inductively,
$$
{\mathcal A}_S = {\mathcal A}_{S^1} = \ldots = {\mathcal A}_{S^l}
$$
and therefore $\MED(S) = \MED(S^1) = \ldots = \MED(S^l) = S^l$. This finishes the proof.
\end{proof}

Let us give then some examples to illustrate how the procedure works. 

\begin{example}
Take $S_4 = \langle 7, 24, 33 \rangle$, where
$$
\wAp(S_4,7) = \big\{ 7,57,72,24,81,33,48 \big\}
$$

We have the following sums in $\wAp(S_4,7)$:
$$
48 = 24+24, \quad 57 = 24+33, \quad 72 = 48+24, \quad 81 = 48+33.
$$

Therefore we would have to add $41$, $50$, $65$ and $74$ to the generators of $S_4 = S_4^1$ in order to get $S_4^2$. In particular we are adding $\PF(S_4) = \big\{ 65, 74 \big\}$, as expected (as neither 72, nor 81 are generators). Note, however, that $74$ is redundant and hence
$$
S_4^2 = \langle 7, 24, 33, 41, 50, 65 \rangle.
$$

Now,
$$
\wAp(S_4^2,7) = \big\{ 7,50,65,24,74,33,41 \big\},
$$
where we only have one sum, precisely $74 = 33+41 = 50+24$. That situation of having more than one sum giving the same element can easily happen, but we are actually only interested in $74$ for the next step. Therefore,
$$
S_4^3 = \langle 7, 24, 33, 41, 50, 65, 67 \rangle,
$$
which is not MED, as 
$$
\wAp(S_4^3,7) = \big\{ 7,50,\mathbf{65},\mathbf{24},67,33,\mathbf{41} \big\}.
$$

Iterating one more time we finally get
$$
S_4^4 = \langle 7, 24, 33, 41, 50, 58, 67 \rangle,
$$
which is MED, as $\wAp(S_4^3,7)$ is precisely the set of minimal generators.
\end{example}

\begin{example}
The preceding example showed a sequence with a strictly increasing dimension (until it became stationary, of course). This might not be the case. Take, for instance:
$$
S_5 = \langle 4,7,17 \rangle,
$$
where
$$
\wAp(S_5,4) = \big\{ 4,17,14,7 \big\},
$$
and we have $14=7+7$ and hence we have to add $10$ to make the Apéry saturation, and then
$$
S_5^2 = \langle 4,7,10 \rangle.
$$
\end{example}

\begin{example}\label{notNS}
It can happen that some elements in $\PF(S)$ are not added to $\widehat{S}$. Take, for instance \cite[Ex.1.27]{GR}, 
$$
S_6 = \langle 11, 12, 13, 32, 53 \rangle,
$$
where $\PF(S_6) = \big\{ 21, 40, 41, 42 \big\}$ and
$$
\wAp(S_6,11) = \big\{ 11, 12, 13, 25, 26, 38, 39, 51, 52, 53, 32 \big\}.
$$

The Apéry saturation of $S_6$ w.r.t. 11 is given by
$$
S_6 \cup \big\{ 14, 15, 27, 28, 40, 41 \big\},
$$
so we have failed to add, not only all the set $\PF(S_6)$, but $f(S_6) = 42$. However, once we consider the semigroup $S_6^2$, we have $42 \in S_6^2$, but $\PF(S_6) \not\subset S_6^2$ as $21 \notin S_6^2$.
\end{example}
\begin{example}
    Consider again $S_1 = \langle 7, 9, 11, 15 \rangle$, and remember
    $$
    \wAp(S_1, 7) = \big\{ 7, 15, 9, 24, 11, 26, 20 \big\},
    $$
    therefore the Apéry saturation is
    $$
    S_1 \cup \big\{ 13, 17, 19 \big\}
    $$
    and we get to 
    $$
    \MED(S_1) = \langle 7, 9, 11, 13, 15, 17, 19 \rangle
    $$
\end{example}

\section{Computing $\MED(S)$: An effective solution}

After giving an Apéry-based solution to the problem we aim to give an effective one, in the sense that we can control (or, at the very least, bound) the number of operations involved.

\begin{remark}
    Let $S$ be a numerical semigroup, and suppose $u,v\in G(S)$ are such that $u>v$ and  $u \equiv v \pmod {m(S)}$. Then, as $u=v+km(S)$ for some $k\geq 0$, we have $\langle S \cup \{u\} \rangle \subseteq \langle S \cup \{v\} \rangle$.
\end{remark}

\begin{lemma}\label{lem:conmed} 
Let $S$ be a numerical semigroup, and $\{a_1,\dots,a_n\}\subset S$. Then for every set of non negative integers $\{k_1,\dots, k_n\}$  with 
$$
K = \sum_{i=1}^n k_i \ge 2,
$$ 
we have
$$
\sum_{i=1}^n k_i a_i - (K-1) m(S)\in \MED(S).
$$
\end{lemma}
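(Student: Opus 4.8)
The plan is to induct on $K=\sum_{i=1}^n k_i$, using the two structural facts about $\MED(S)$ already available. First, $S\subseteq \MED(S)$ and $m(\MED(S))=m(S)=m$, so every nonzero element of $\MED(S)$ is at least $m$; in particular each $a_i\in S\subseteq\MED(S)$ satisfies $a_i\ge m$. (Here the statement must be understood for nonzero $a_i$: a term $a_i=0$ with $k_i>0$ would subtract a spurious multiple of $m$ and the conclusion can fail, so we take all $a_i\ge m$.) Second, since $\MED(S)$ is a MED semigroup, its multiplicity $m$ is an Arf element, i.e.\ for all $x,y\in\MED(S)$ with $x,y\ge m$ we have $x+y-m\in\MED(S)$. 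These are exactly the ingredients needed to ``peel off'' one summand at a time.

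For the base case $K=2$ the expression is $a_p+a_q-m$ for two (not necessarily distinct) indices $p,q$ with $a_p,a_q\ge m$; both lie in $\MED(S)$, so the Arf property of $m$ gives $a_p+a_q-m\in\MED(S)$ directly. For the inductive step, assume the claim for all coefficient vectors of total weight $K-1\ge 2$. Given $(k_1,\dots,k_n)$ with $\sum_i k_i=K\ge 3$, fix $i_0$ with $k_{i_0}\ge 1$, set $k_i'=k_i-\delta_{i,i_0}$ so that $\sum_i k_i'=K-1$, and put
$$
y=\sum_{i=1}^n k_i' a_i-(K-2)m.
$$
By the induction hypothesis $y\in\MED(S)$, and since $\sum_i k_i'=K-1\ge 1$ and each $a_i\ge m$ we get $\sum_i k_i' a_i\ge (K-1)m$, hence $y\ge m$. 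As also $a_{i_0}\ge m$, the Arf property yields $a_{i_0}+y-m\in\MED(S)$, and
$$
a_{i_0}+y-m=a_{i_0}+\sum_{i=1}^n k_i' a_i-(K-1)m=\sum_{i=1}^n k_i a_i-(K-1)m,
$$
which is the element in question; this closes the induction.

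The main thing to get right is (i) checking that the reduced element $y$ is itself $\ge m$ so that the Arf reduction is legitimate — this is the only place the proof could slip, and I handle it purely from $a_i\ge m$ together with $\sum_i k_i'\ge 1$ — together with (ii) the exclusion of zero terms. Everything else is the routine unwinding of the identity displayed above.

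Alternatively, one can bypass the induction entirely by invoking Theorem \ref{compMED}. Writing $S=\langle m,m+r_1,\dots,m+r_p\rangle$ and using $K=\sum_i k_i$ to rewrite
$$
\sum_{i=1}^n k_i a_i-(K-1)m=m+\sum_{i=1}^n k_i(a_i-m),
$$
it suffices to observe that each nonzero $a_i\in S$ satisfies $a_i-m\in\langle m,r_1,\dots,r_p\rangle$ (expand $a_i$ in the generators and move one copy of $m$ out, which is possible precisely because $a_i\ge m$). Then the right-hand side lies in $m+\langle m,r_1,\dots,r_p\rangle\subseteq\MED(S)$ by Theorem \ref{compMED}. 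I would present the inductive argument as the primary proof, since it is self-contained and uses only the Arf characterization of $\MED(S)$, and note this second route as a shortcut.
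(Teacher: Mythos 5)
Your inductive argument is essentially the paper's own proof: peel off one summand $a_{i_0}$, apply the induction hypothesis to the reduced sum, and use that $m$ is an Arf element of $\MED(S)$ to conclude. You are in fact slightly more careful than the paper in verifying that the reduced element $y$ is at least $m$ (so the Arf reduction applies) and in flagging the implicit requirement that the $a_i$ be nonzero; the alternative route via Theorem \ref{compMED} is a nice remark but not needed.
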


\begin{proof} 
We will prove it by induction in $K$. If $K=2$ the result follows since $m(S)$ is Arf in $\MED(S)$. 

Suppose it is true for a certain $K\ge 2$ and we want to prove it for the next case $K+1 = \sum_{i=1}^n k_i$. We can assume, without loss of generality, that  $k_1\ge1$. Then
\begin{eqnarray*}
\sum_{i=1}^nk_ia_i-Km(S)&=&a_1-m(S)+\sum_{i=1}^nk_i'a_i-(K-1)m(S)\\
&=&A_1+A_2-m(S),
\end{eqnarray*}
where $k_i'=k_i$ if $i\ge 2$, $k_1'=k_1-1$, 
$$
A_2=\sum_{i=1}^nk_i'a_i-(K-1)m(S)\in \MED(S),
$$ 
by the induction hypothesis and  $A_1=a_1\in S\subset \MED(S)$. Hence 
$$
\sum_{i=1}^nk_ia_i-Km(S)\in \MED(\MED(S))=\MED(S)
$$
as we wanted.
\end{proof}

\



\begin{lemma}\label{lem:ds} 
Let  $a_1 < \cdots < a_{n}$ be a minimal set of generators of a semigroup $S$ and suppose $a$ is in the minimal set of generators of $\MED(S)$. Then we can write
$$
a=\sum_{i=2}^{n} k_i a_i-(K-1)m(S),
$$ 
for some $k_i \geq 0$, and 
$$
K = \sum_{i=2}^n k_i \le \frac{c(S)}{a_2-m(S)}.
$$
\end{lemma}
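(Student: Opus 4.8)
The plan is to combine the explicit description of the MED closure from Theorem \ref{compMED} with the observation that a minimal generator cannot have a copy of $m = m(S)$ split off it. Writing $r_i = a_i - m$ for $i = 2, \ldots, n$ (so that $\gcd(m, r_2, \ldots, r_n) = \gcd(a_1, \ldots, a_n) = 1$), Theorem \ref{compMED} gives
$$
\MED(S) = \big( m + \langle m, r_2, \ldots, r_n \rangle \big) \cup \{0\}.
$$
If $a = m$ the statement holds trivially with all $k_i = 0$ and $K = 0$, so I assume $a > m$. Since $m > 0$ and $a - m > 0$ and $a = m + (a - m)$, minimality of $a$ as a generator of $\MED(S)$ forces $a - m \notin \MED(S)$: otherwise $a$ would be a nontrivial sum of two nonzero elements of $\MED(S)$.

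Next I would extract the representation. As $a \in \MED(S) \setminus \{0\}$, the displayed formula gives $a - m \in \langle m, r_2, \ldots, r_n \rangle$, say $a - m = \lambda m + \sum_{i=2}^n k_i r_i$ with $\lambda, k_i \geq 0$. I claim $\lambda = 0$. Indeed, if $\lambda \geq 1$ then
$$
a - m = m + \Big( (\lambda - 1) m + \sum_{i=2}^n k_i r_i \Big) \in m + \langle m, r_2, \ldots, r_n \rangle \subseteq \MED(S),
$$
contradicting $a - m \notin \MED(S)$. Hence $a - m = \sum_{i=2}^n k_i (a_i - m)$, and rearranging with $K = \sum_{i=2}^n k_i$ yields exactly $a = \sum_{i=2}^n k_i a_i - (K-1) m$.

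Finally, for the bound I would compare sizes. Since $a_i \geq a_2$ for every $i \geq 2$ and the $k_i$ are non-negative,
$$
a - m = \sum_{i=2}^n k_i (a_i - m) \geq \Big( \sum_{i=2}^n k_i \Big)(a_2 - m) = K (a_2 - m),
$$
so $K \leq (a - m)/(a_2 - m)$, where $a_2 - m > 0$. It remains to bound $a - m$: because $a - m > 0$ and $a - m \notin \MED(S)$, it is a gap of $\MED(S)$, whence $a - m \leq f(\MED(S))$. As $S \subseteq \MED(S)$ we have $f(\MED(S)) \leq f(S) = c(S) - 1 < c(S)$, and therefore $K \leq (a - m)/(a_2 - m) < c(S)/(a_2 - m)$, which gives the claimed inequality. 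The main obstacle is the middle step: one has to recognize that minimality of $a$ is exactly what guarantees $a - m \notin \MED(S)$, and then feed this single fact into the explicit shape of $\MED(S)$ from Theorem \ref{compMED} to force the coefficient of $m$ to vanish; the very same fact then does double duty in the final step, where it is what pins $a - m$ below a gap of $\MED(S)$.
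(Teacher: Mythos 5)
Your proof is correct, and it follows the same skeleton as the paper's --- both extract the representation from Theorem \ref{compMED} and use minimality of $a$ to kill the extra copies of $m(S)$ --- but the two key sub-steps are justified differently, and in both places your version is the more self-contained one. For the vanishing of the coefficient of $m$: the paper writes $a = d\,m(S) + \bigl(\sum_{i\ge 2} k_i a_i - (K-1)m(S)\bigr)$ and invokes Lemma \ref{lem:conmed} to place the bracketed term in $\MED(S)$, whereas you observe directly that minimality forces $a - m \notin \MED(S)$ and then read off $\lambda = 0$ from the explicit shape $\MED(S)\setminus\{0\} = m + \langle m, r_2,\dots,r_n\rangle$; this bypasses Lemma \ref{lem:conmed} entirely (the paper still needs that lemma elsewhere, e.g.\ in Lemma \ref{lem:representante}, so nothing is saved globally, but your argument for this lemma stands on Theorem \ref{compMED} alone). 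For the upper bound on $K$: the paper argues that $a$ ``must have been added in some Ap\'ery saturation,'' hence $a \le c(S) + m(S)$ --- a step that leans on the algorithm of the previous section and is stated rather loosely --- while you get the same numerical bound from the cleaner observation that $a - m$ is a gap of $\MED(S)$, so $a - m \le f(\MED(S)) \le f(S) < c(S)$ (using $\operatorname{G}(\MED(S)) \subseteq \operatorname{G}(S)$). Your route even yields the strict inequality $K < c(S)/(a_2 - m(S))$, marginally sharper than the stated bound, and it reuses the single fact $a - m \notin \MED(S)$ for both halves of the proof, which is a nice economy.
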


\begin{proof}

By construction for $2\le i\le n$, $a_i=a_1+r_i$, where $r_i\ge 1$ and such that $\gcd (a_1,r_2,\dots,r_n)=1$. By Theorem \ref{compMED} 
$$
a= (d+1) a_1 + \sum_{i=2}^nk_ir_i,
$$ 
for some $d, k_i\ge 0$ for $i=2,\dots, n$. Letting $K=\sum_{i=2}^nk_i$ we see that 
$$
a=dm(S)+\sum_{i=2}^nk_i(r_i+m(S))-(K-1)m(S).
$$
But by the previous lemma, 
$$
\sum_{i=2}^nk_i(r_i+m(S))-(K-1)m(S)\in \MED(S),
$$
and hence $d=0$ by minimality of $a$ and the first part of the lemma follows. 

It remains to prove the upper bound. Now since $a$ is a minimal generator of $\MED(S)$ and must have been added in some Apéry saturation, it is direct that $a \le c(S)+m(S)$. Writing as before
$$
a=\sum_{i=2}^{n} k_i a_i-(K-1)m(S),
$$ 
we have then
$$
Ka_2-(K-1)m(S) \le \sum_{i=2}^{n} k_i a_i-(K-1)m(S) =a \le c(S)+m(S),
$$
and the result now follows.
\end{proof}

In order to estimate complexity orders later on an upper bound for the conductor of the semigroup might be handy and there are many available such bounds in the literature depending on the usual invariants of $S$. For example we know by \cite{Selmer} that, if $S$ is generated by $m(S)=a_1 < \cdots < a_n$, then 
\begin{equation}\label{eq:cs}
c(S) \le 2a_n\left[\frac{m(S)}{n}\right]-m(S)+1.
\end{equation}

From this bound, and the previous result, let us write
$$
d(S) = \frac{\displaystyle 2a_n\left[\frac{m(S)}{n}\right]-m(S)+1}{a_2-m(S)},
$$
although the numerator can be modified to include any other bound (or expression) of $c(S)$, should this be convenient. In particular, note that $K \leq d(S)$ in the statement of Lemma \ref{lem:ds}.

\ 

We can now present an alternative algorithm to compute $\MED(S)$, one whose number of steps can be controlled by the previous results. The input will be $S$, given by a minimal set of generators $a_1 < \cdots <a_n$. This system contains $n-1$ non--zero classes modulo $a_1 = m(S)$.

\begin{lemma} \label{lem:representante}
Let $S=\{m=a_1<\dots <a_n\}$ given by a minimal set of generators and $M=\{i_1,\dots,i_{n-1}\}$ the non--zero residue classes modulo $m$ represented by the generators. 

\medskip

For each $1 \le i < m$, let us define $c_i$ to be any element of $S$ with $c_i\ge d(S)\sum_{i=2}^na_i$ and such that $c_i \equiv i \pmod m$. 
Consider now
$$
K_i=\max \left\{ \sum_{j=1}^nk_{ij} \, : \, \sum_{j=1}^nk_{ij}a_j=c_i \right\}
$$
and $g_i=c_i-(K_i-1)m$. Then
$$
\MED(S)=\{m=g_0,\dots, g_{m-1}\}.
$$
\end{lemma}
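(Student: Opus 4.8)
The plan is to read the displayed equality as a computation of the \emph{weighted Apéry set}: the assertion $\MED(S)=\{m=g_0,\dots,g_{m-1}\}$ means $\wAp(\MED(S),m)=\{g_0,\dots,g_{m-1}\}$, since $g_i\equiv i\pmod m$ and $\MED(S)$ is a MED semigroup of multiplicity $m$, hence generated by and determined by its weighted Apéry set. Thus it suffices to show that for each $i$ the number $g_i$ is the \emph{smallest} element of $\MED(S)$ in the class $i$ modulo $m$, with $g_0=m$. Writing $\omega(c)=\max\{\sum_j k_j : c=\sum_j k_j a_j\}$ for the maximal factorisation length of $c\in S$, note that by definition $K_i=\omega(c_i)$, so $g_i=c_i-(\omega(c_i)-1)m$. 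Let $w_i$ denote the least element of $\MED(S)$ in class $i$; then $w_0=m$, and for $i\neq0$ the element $w_i$ is the non-zero Apéry element, which is a minimal generator of $\MED(S)$ because $\wAp(\MED(S),m)$ is the minimal generating set of a MED semigroup.

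First I would establish membership and the easy inequality. Since $c_i\ge d(S)\sum_{j=2}^n a_j>a_n$, the element $c_i$ is not a single generator, so $\omega(c_i)=K_i\ge 2$; Lemma \ref{lem:conmed} then gives $g_i=c_i-(K_i-1)m\in\MED(S)$. As $g_i\equiv i\pmod m$, minimality of $w_i$ forces $g_i\ge w_i$.

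The substantive step is the reverse inequality $g_i\le w_i$, proved by a representation-lifting argument in which the size hypothesis on $c_i$ is used. By Lemma \ref{lem:ds} applied to the minimal generator $w_i$ we may write
$$
w_i=\sum_{j=2}^n k_j a_j-(K-1)m,\qquad K=\sum_{j=2}^n k_j\le d(S).
$$
Setting $c=\sum_{j=2}^n k_j a_j\in S$, we get $c\le K a_n\le d(S)\sum_{j=2}^n a_j\le c_i$ and $c\equiv c_i\equiv i\pmod m$, so $c_i=c+tm$ for some $t\ge0$. Adjoining $t$ copies of the generator $m$ to the chosen factorisation of $c$ yields a factorisation of $c_i$ of length $K+t$, whence $\omega(c_i)\ge K+t$ and
$$
g_i=c_i-(\omega(c_i)-1)m\le c_i-(K+t-1)m=c-(K-1)m=w_i.
$$
Combining both inequalities gives $g_i=w_i$ for every $i$, which is the claim; the class $i=0$ is the degenerate case $c=0$, $K=0$, giving $g_0=m$ directly.

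The main obstacle I anticipate is precisely the threshold: one must check that the lower bound $c_i\ge d(S)\sum_{j=2}^n a_j$ is exactly strong enough to guarantee $c\le c_i$ (so that $t\ge0$), which is what lets the bounded factorisation of $w_i$ from Lemma \ref{lem:ds} be ``lifted'' inside $c_i$. Conceptually this replaces an explicit analysis of the eventual stabilisation of the maximal factorisation length $\omega$ on each residue class by the cleaner lifting inequality above; the care needed is to verify that \emph{any} admissible $c_i$ of that size works, not a distinguished one, and that the argument degrades gracefully in the edge cases $K\in\{0,1\}$.
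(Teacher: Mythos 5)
Your proof is correct and rests on the same two pillars as the paper's: Lemma \ref{lem:conmed} for membership $g_i\in\MED(S)$, and Lemma \ref{lem:ds} plus the lifting $c_i=c+tm$ (justified by the size threshold $c_i\ge d(S)\sum_{j\ge2}a_j\ge Ka_n$) to relate $g_i$ to the minimal element $w_i$ of its residue class. The organization differs in a way that is worth noting. The paper first argues that $g_i$ is independent of the choice of $c_i$ and equals the expression from Lemma \ref{lem:ds}, and then separately verifies that $\{g_0,\dots,g_{m-1}\}$ is sum--free in order to conclude that the semigroup it generates is MED and hence equals $\MED(S)$; in doing so it silently identifies the maximal factorisation length $K_i$ of $c_i$ with $K+t$, without ruling out a longer factorisation. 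Your two--sided inequality handles exactly this point: the lifted factorisation only gives $K_i\ge K+t$, hence $g_i\le w_i$, while Lemma \ref{lem:conmed} applied to a maximal factorisation gives $g_i\in\MED(S)$ and hence $g_i\ge w_i$. Pinning $g_i$ down as $w_i$, i.e.\ as the element of $\wAp(\MED(S),m)$ in class $i$, then makes the sum--freeness verification unnecessary, since the weighted Apéry set of a MED semigroup is its minimal generating set. So your route is essentially the paper's, but tightened; the only small things to make explicit are that $K_i\ge2$ (which you get from $c_i>a_n$, using $d(S)>1$ because $a_2-m$ is a gap and hence less than $c(S)$) so that Lemma \ref{lem:conmed} applies, and that each $w_i$ with $i\ne0$ really is a minimal generator of $\MED(S)$ so that Lemma \ref{lem:ds} applies to it.
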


\begin{proof}
First note that $g_i$ is independent of the selection of $c_i$. Indeed, by Lemma  \ref{lem:ds} we know that the minimal element of MED(S) in the congruent class $i$ is of the 
$g_i=C_i-(K-1)m$ for some 
$$
C_i=\sum_{j=2}^nk_ia_i\le d(S)\sum_{j=2}^na_i.
$$

Hence for any $c_i$ in the conditions of the lemma, we will have $c_i=C_i+tm$ for some $t\ge 0$ but then, writing $k_1=t$, 
$$
c_i=\sum_{j=1}^nk_ia_i, \quad \sum_{j=1}^nk_i=K+t
$$ 
and, therefore,
$$
c_i-(t+K-1)m=C_i-(K-1)m=g_i.
$$

Now, note that by Lemma \ref{lem:conmed}, $g_i\in \MED(S)$, and the semigroup generated by $\{m=g_0,\dots, g_{m-1}\}$ contains $S$, so the proof would end if we proof that  $\{g_0,\dots, g_{m-1}\}$ is a sumfree set. But 
$$
g_i+g_l=c_i+c_l-(K_i+K_l-2)m>c_i+c_l-(K_i+K_l-1)m\ge c_i+c_l-(K_{i+l}-1)m=g_{i+l}.
$$ 

For the last inequality note that we can take $c_{i+l}=c_i+c_l\ge d(S)\sum_{i=2}^na_i$, hence 
$$
c_{i+l}=\sum_{j=1}^n \left( k_{ij}+k_{lj} \right)a_j
$$ 
and then by maximality $\sum_{j=1}^n(k_{ij}+k_{lj})\le K_{i+l}$, so the sum is not in $\{g_0,\dots, g_{m-1}\}$.
\end{proof}




\begin{theorem} 
Let $m$ be a positive integer fixed and $S=\{m=a_1<\dots<a_n\}$ be a semigroup given by a minimal set of generators. Then we can compute $\MED(S)$ in time linear in the size of the generators. 
\end{theorem}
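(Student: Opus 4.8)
The plan is to recast the description of $\MED(S)$ coming from Lemma \ref{lem:representante} as a single shortest--path computation, and then to read off the complexity from the fact that $m$ is held fixed. The key simplification is that, by Theorem \ref{compMED}, the non--zero part of $\MED(S)$ is exactly $m + \langle m, r_2, \ldots, r_n\rangle$, where $r_j = a_j - m \geq 1$ and $\gcd(m,r_2,\ldots,r_n)=1$. Writing $H = \langle m, r_2, \ldots, r_n\rangle$, a short computation shows that the element $g_i$ of Lemma \ref{lem:representante} equals $w_i + m$, where $w_i$ is the smallest element of $H$ in the residue class $i$ modulo $m$: indeed $g_i = c_i-(K_i-1)m = c_i - K_i m + m$, and since $c_i = \sum_{j=1}^n k_{ij} a_j = K_i m + \sum_{j\ge 2}k_{ij} r_j$, the quantity $c_i - K_i m$ is precisely the minimal value of $\sum_{j\ge 2}k_j r_j$ over representations of $c_i$. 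Thus the $c_i$ and $K_i$ need not be computed at all: the whole task reduces to computing the Apéry set $\Ap(H,m)$ and shifting it by $m$.

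First I would build the classical circular graph attached to $\Ap(H,m)$: take $\Z/m\Z$ as vertex set and, for each generator $r_j$ with $2 \le j \le n$, a directed edge from $v$ to $v+r_j \bmod m$ of weight $r_j$. Because $\gcd(m,r_2,\ldots,r_n)=1$, the residues generate $\Z/m\Z$ as a group, so every vertex is reachable from $0$, and the weight of a shortest path from $0$ to $i$ is exactly $w_i$. Running Dijkstra's algorithm from $0$ then produces all the $w_i$ simultaneously, hence all $g_i = w_i + m$ (with $g_0 = m$), giving $\Ap(\MED(S),m)$. Extracting a minimal generating set of $\MED(S)$ from $\{g_0,\ldots,g_{m-1}\}$ is the final, cheap step of discarding those $g_i$ that are non--minimal for $\leq_{\MED(S)}$.

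For the complexity estimate, the graph has $m$ vertices and $m(n-1)$ edges, so Dijkstra performs $O(mn + m\log m)$ operations, each on integers bounded by the conductor estimate (\ref{eq:cs}) and hence of bit--size polynomial in the input. Since $m$ is fixed, the number of vertices is a constant and this count collapses to a linear sweep over the list of generators, which is the claimed linear dependence on their size.

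The step I expect to be the genuine obstacle is exactly this complexity claim, and it rests entirely on $m$ being fixed: for variable $m$ the same computation is the Apéry/Frobenius problem, which is NP--hard, as recalled after the definition of the Apéry set. The crux is therefore to make explicit that a constant number of vertices turns the shortest--path computation into something linear in $n$, and to verify via (\ref{eq:cs}) that all intermediate quantities remain polynomially bounded so that each arithmetic operation is inexpensive.
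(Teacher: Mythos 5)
Your argument is correct, and it takes a genuinely different route from the paper's. The paper proves the theorem by brute force on the description in Lemma \ref{lem:representante}: for each residue class it fixes a large representative $c_i$, observes via the reduction $k_{ij}=r_{ij}+u_{ij}a_1$ that a maximizing representation can be assumed to have $k_{ij}\le m-1$ for $j\ge 2$, and then enumerates all such coefficient tuples, bounding their number by $\binom{(n-2)(m-1)}{n}<2^{n\log m}/\sqrt{n}$; the only step whose cost grows with the generators is a single division, costing $O(\log m\log a_n)$ bit operations, which yields the claimed linearity once $m$ (hence also $n\le m$) is frozen. You instead observe that maximizing $K_i$ for fixed $c_i$ is the same as minimizing $\sum_{j\ge 2}k_{ij}r_j$, so that $g_i=w_i+m$ with $w_i\in\Ap(H,m)$ for $H=\langle m,r_2,\dots,r_n\rangle$ (consistently with Theorem \ref{compMED}), and you compute $\Ap(H,m)$ by Dijkstra on the standard circulant graph --- essentially Nijenhuis' minimal-path algorithm. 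This eliminates the representatives $c_i$ and the enumeration altogether and costs $O(mn+m\log m)$ arithmetic operations on integers of bit-size $O(\log a_n+\log m)$, which is not only linear for fixed $m$ but \emph{polynomial} in $m$ and $n$; in other words, your proof already delivers the improvement that the paper's subsequent remark (on the exponential dependence on $m$) leaves as future work. Two small points worth making explicit if you write this up: reachability of every vertex from $0$ uses that the sub-semigroup of the finite group $\Z/m\Z$ generated by $r_2,\dots,r_n$ coincides with the subgroup they generate, which is all of $\Z/m\Z$ since $\gcd(m,r_2,\dots,r_n)=1$; and no final extraction step is needed, since $\{g_0,\dots,g_{m-1}\}=\wAp(\MED(S),m)$ is already the minimal generating set because $\MED(S)$ is MED.
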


\begin{proof} 
The first step in the algorithm will be selecting for each congruence class $1\le i<m-1$ an element $c_i\in S$, $c_i\ge d(S)\sum_{i=2}^na_i$. Then we need to find the maximum $\sum_{i=1}^nk_{ij}$ among the possible representations 
$$
c_i=\sum_{i=1}^nk_{ij}a_j.
$$

It is clear that if $k_{ij}=r_{ij}+u_{ij}a_1$ with $0\le r_{ij}<m-1$, then
$$
k_{i1}a_1+k_{ij}a_j=(k_{i1}+u_{ij}a_j)a_1+r_{ij}a_j
$$
and 
\begin{equation}\label{eq:maxk}
k_{i1}+k_{ij} = k_{i1}+r_{ij}+u_{ij}a_1 \le k_{i1}+u_{ij}a_j+r_{ij}
\end{equation}

In particular the maximum will be attained for some values $k_{ij}\le m-1$ for any $2 \le j \le n$. Hence, we will select at random $k_{ij}$ for $2 \le j \le n-1$ and 
$$
k_{in} \equiv i-\sum_{j=2}^{n-1}k_{ij}a_j.
$$
Finally $k_{i1}=(\sum_{j=2}^{n}k_{ij}a_j-i)/a_1$

\

Hence, since $k_{ij} \le m-1$, 
$$
\sum_{j=2}^{n-1}k_{ij}\le (n-2)(m-1),
$$
so it is like selecting at most $(n-2)(m-1)$ elements among $a_2,\dots, a_{n-1}$ with repetitions which is
$$
\binom{(n-2)(m-1)}{n}<\frac{2^{n\log m}}{\sqrt n}
$$ 
for the last inequality see \cite[p.309]{WS} and we need to do it $m-2$ times, one for each congruence class. Hence, since dividing $(\sum_{j=2}^{n}k_{ij}a_j-i)$ by $a_1$ takes $O(\log m\log a_n)$ bit operations, finding the maximum will be bouded by 
$$
O \left( \log m\log a_n+\frac{m}{\sqrt n}2^{n\log m} \right)
$$ 
bit operations. 
\end{proof}

\begin{remark} 
With the bound given, the dependence in $m$ is exponential. However, we believe this bound can be improved, and we leave this task for future work. 
\end{remark}


\begin{example}
We will use $S_4 = \langle 7,24,33 \rangle$. Then $d(S)<8$ and $\sum_{i=2}^na_i=57$. Take $c_i$ from the following table
\begin{center}
\begin{tabular}{c c c c c c c }
$i$  &$1$&$2$&$3$&$4$&$5$&$6$
\\
$c_i$ &$456$ &$576$ & $528$ &$648$& $495$& $552$
\end{tabular}
\end{center}
Now:
\begin{itemize}
    \item $456=33+24+57\times 7$, so $g_1=456-58\times 7=50$,
    \item $576=3\times 24+7\times 72$, so $g_2=576-74\times 7=58$,
    \item $528=72\times 7+24$ so $g_3=528-72\times 4=24$,
    \item $648=2\times 24+33+7\times 81$, so $g_4=648-83\times 7=67$,
    \item $495=33+66\times 7$, so $g_5=495-66\times 7=33$, and
    \item $552=2\times 24+72\times 7$, so $g_6=552-73\times 7=41$.
\end{itemize} 

Hence $\MED(S)=\langle 7,24,33,50,58,67,41 \rangle$.
\end{example}

\


\begin{example}
Let us take now $S_6 = \{11,12,13,32,53\}$.
\end{example}
Then $d(S)=208$ y $\sum_{i=2}^na_i=110$ and since $22880=208\times 110=2080\times 11$  we can  take
$$
c_i=(22880+i)12=12\times i+(2080\times 12)11
$$
Then $g_i=c_i-(2080\times 12+i-1)11=11+i$, so
$$
\MED(S)=\langle 11,12,13,14,15,16,17,18,19 \rangle.
$$

\








\begin{example}
$S=\{4,7,17\}$ 
\end{example}
In this case $d(S)\le 11$ and $a_2+a_3=24$ so take $K=264=4\times 66$. Take $c_1=289$, $c_2=306$, $c_3=323$. Then:
\begin{itemize}
    \item $c_1=3\times 7+67\times 4$, so $g_1=13$, 
    \item $c_2=2\times 7+73\times 4$, so $g_2=10$, and finally 
    \item $c_3=7+79\times 4$ so $g_3=7$,
\end{itemize}
therefore $\MED(S)=\{4,7,10,13\}$.

\section{Final remarks}

In this paper we have isolated the Arf property for elements of a numerical semigroup, a property can be expressed in terms of the Apéry set of the element. This allows us to illustrate clearly the difference between MED and Arf numerical semigroups. 

\ 

As an application of this, we have described two new algorithms to compute the MED closure of a given semigroup, taking advantage of the previous description and highlighting some new features of the minimal system of generators of said clousure. For the last of them we can actually compute its complexity in terms of the generators. To our knowledge, it is the first result in the literature in this regard.

\

\noindent \textbf{Conflict of interest and data availability statement:} Not applicable.

\end{document}